\numberwithin{equation}{section}
\newtheorem{thm}[equation]{Theorem}
\newtheorem{lem}[equation]{Lemma}
\newtheorem{coro}[equation]{Corollary}
\theoremstyle{definition}
\theoremstyle{remark}
\newtheorem{remark}[equation]{Remark}
\title{Small $\kappa$ Asymptotics of the Almost Sure Lyapunov Exponent for the Continuum Parabolic Anderson Model}
\author{Michael Rael\footnote{Department of Mathematics, University of California, Irvine, CA 92697-3875, USA}}
\date{\today}
\begin{document}

\maketitle

\begin{abstract}
We prove that the almost sure Lyapunov exponent $\lambda(\kappa)$ of the continuous space Parabolic Anderson Model is bounded above by $c_u \kappa^{1/3}$ as $\kappa\downarrow0$ under mild regularity conditions.
This bound of the same order of the previously proven lower bound, $\lambda(\kappa) \ge c_l \kappa^{1/3}$.
\end{abstract}

\section{Background}

Let $\{ W_x : x\in\mathbb{R}^d \}$ be a Gaussian field of identically distributed copies of mean $0$ Brownian Motion defined on the probability space $(\Omega,\mathcal{F},Q)$.
This field has covariance given by $\mathbb{E}^Q[W_x(t) W_y(s)] = \Gamma(x-y) (t \wedge s)$ where $\Gamma(z) = \Gamma(\|z\|_2)$ is twice continuously differentiable,
  bounded by $0\le\Gamma(z)\le1$, and has the following Taylor expansion near $0$:
\begin{equation} \label{gamma_taylor}
  \Gamma(z) = 1 - c_d \|z\|_2^2 + o(\|z\|_2^2) .
\end{equation}
This assumption on the Taylor expansion of $\Gamma$ can be relaxed considerably, see Remark \ref{metric_domination}.

We consider the following stochastic differential equation over $\mathbb{R}^d$,
\begin{equation} \label{PAM_S}
  du(x,t) =  \frac{\kappa}{2} \triangle u(x,t)dt + u(x,t) \partial W_x(t), \quad x\in\mathbb{R}^d, t>0,
\end{equation}
where $\kappa > 0$ is constant, $\partial W_x$ denotes the Stratonovich differential of $W_x$, $\triangle$ is the Laplacian, and $u(x,0)\equiv1$.
Equation (\ref{PAM_S}) is called the Parabolic Anderson Model in $\mathbb{R}^d$, hereafter PAM.

In \cite{CaVi} the existence of a solution to (\ref{PAM_S}) was established, as was the validity of the Feynman-Kac representation of the solution:
\begin{equation} \label{F-K_S}
  u(x,t) = \mathbb{E}^X_x \left[ e^{\int_0^t dW_{X(t-s)}(s)} \right].
\end{equation}
where $X(s)$ is a $\kappa$ speed $d$-dimensional Brownian Motion, i.e.\ the diffusion with generator $\frac{\kappa}{2}\triangle$.
Throughout this paper $\mathbb{P}$ and $\mathbb{E}^X$ denote the probability measure of $X$ and expectation with respect to $\mathbb{P}$, respectively.

In studying the PAM, the Lyapunov exponent
\begin{equation} \label{lyapunov}
  \lambda(\kappa) = \lim_{t\to\infty} \frac{1}{t} u(x,t)
\end{equation}
has been of primary interest.
The existence of $\lambda(\kappa)$ as a deterministic limit, and its convexity were established in \cite{CaMo, CrMoSh, CrMo06}.

It it the purpose pf this paper to improve previously derived bounds on the small $\kappa$ behavior of $\lambda(\kappa)$.
In \cite{CrMo06} it was proven that 
\begin{equation} \label{one_third_lower_bound}
  \liminf_{\kappa \downarrow 0} \frac{\lambda(\kappa)}{\kappa^{1/3}} \ge c
\end{equation}
and that
\begin{equation}
  \limsup_{\kappa \downarrow 0} \frac{\lambda(\kappa)}{\kappa^{1/5}} \le c.
\end{equation}
It was conjectured that the lower bound (\ref{one_third_lower_bound}) gave the correct asymptotics for $\lambda(\kappa)$.
We prove this conjecture.

\begin{thm} \label{one_third_bound}
  Under the aforementioned conditions
  \begin{equation}
    \limsup_{\kappa \downarrow 0} \frac{\lambda(\kappa)}{\kappa^{1/3}} \le c.
  \end{equation}
\end{thm}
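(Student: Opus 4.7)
My plan is to sharpen the upper bound by a joint space--time discretization of the Feynman--Kac integral, followed by Gaussian maximal-inequality estimates.

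Fix scales $\ell,h>0$ (later optimized) and partition $[0,t]$ into $n=\lfloor t/h\rfloor$ blocks $[kh,(k+1)h]$. For each Brownian path $X$ let $Y_k\in\ell\mathbb{Z}^d$ be the lattice site nearest $X(kh)$. On the event $\{\sup_{s\in[kh,(k+1)h]}\|X(s)-Y_k\|\le\ell\}$, which has $\mathbb{P}$-probability of order one as long as $\ell\gtrsim\sqrt{\kappa h}$, use the Taylor bound $1-\Gamma(z)\le c_d\|z\|^2$ from (\ref{gamma_taylor}) to approximate $\int_{kh}^{(k+1)h}\partial W_{X(t-s)}(s)$ by the single Gaussian increment $\Delta_k:=W_{Y_k}((k+1)h)-W_{Y_k}(kh)$, incurring a Gaussian error of variance at most $c_d\ell^2 h$. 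Grouping Brownian paths by the lattice trajectory $(Y_0,\dots,Y_n)$ produces a finite family of path classes; the natural constraint $\|Y_{k+1}-Y_k\|\lesssim\sqrt{\kappa h}$ restricts the number of admissible classes to at most $N^n$ for some $N=N(d)$ when $\ell\sim\sqrt{\kappa h}$.

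Since $\mathbb{E}^X_0[e^{I_t}]\le\max_{\text{class}}\sup_{X\in\text{class}}e^{I_t}$, taking logarithms and absorbing the quadratic-variation error through Gaussian concentration for the $W$-field produces the pointwise estimate
\[
\log u(0,t) \;\le\; \max_{(Y_k)}\sum_{k=0}^{n-1}\Delta_k \;+\; \tfrac12 c_d\,\ell^2\,t \;+\; o(t), \qquad Q\text{-a.s.}
\]
The maximum on the right is over at most $N^n$ Gaussians, each of variance $t$ (a sum of $n$ independent $N(0,h)$'s along a trajectory); the Gaussian maximal inequality together with Borel--Cantelli along integer times gives $\max\le t\sqrt{2\log N/h}\,\bigl(1+o(1)\bigr)$ almost surely. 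Choosing $h\sim\kappa^{-2/3}$ and $\ell\sim\kappa^{1/6}$ satisfies the compatibility $\ell\sim\sqrt{\kappa h}$ and makes both $\sqrt{2\log N/h}$ and $\tfrac12 c_d\ell^2$ of order $\kappa^{1/3}$, yielding $\lambda(\kappa)\le c\kappa^{1/3}$.

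The main obstacle---and presumably what forced the prior bound to be $\kappa^{1/5}$---is fully exploiting the \emph{strictly quadratic} behavior of $1-\Gamma$ near the origin; a Lipschitz-type estimate on $W_x-W_y$ would replace $c_d\ell^2$ by $c_d\ell$ and immediately spoil the exponent. A secondary concern is the Stratonovich interpretation in (\ref{F-K_S}): the It\^o--Stratonovich correction contributes an $O(t)$ drift that must be cancelled precisely against the $\tfrac12\int_0^t\Gamma(0)\,ds$ variance accumulation, so that the subleading $\kappa^{1/3}$ scale is actually visible in the surviving terms.
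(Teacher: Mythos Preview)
Your scales $h\sim\kappa^{-2/3}$, $\ell\sim\kappa^{1/6}$ are exactly the right ones and match the paper's choices $\epsilon_n\sim\kappa^{1/6}t^{1/2}$, $C_n\sim\kappa^{2/3}t^{1/2}$; the space--time lattice discretization and the paper's Cameron--Martin $\epsilon$-nets are close cousins. But the sketch has a genuine gap. The restriction to $N^n$ ``admissible'' lattice trajectories, via $\|Y_{k+1}-Y_k\|\lesssim\sqrt{\kappa h}$ and $\sup_{s}\|X(s)-Y_k\|\le\ell$, is not a deterministic constraint on Brownian paths: it holds only on a good event whose $\mathbb{P}$-probability you call ``of order one''. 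That is not enough, because on the complement $e^{F(X)}$ can be enormous (under $Q$, $F(X)$ has variance $t$ for \emph{every} path $X$), so $\mathbb{E}^X\bigl[e^{F(X)};\ \text{bad}\bigr]$ is not controlled by any single-scale bound. The paper closes this with a \emph{shell decomposition}: nested nets $\Gamma_n$ with $C_n,\epsilon_n$ growing in $n$, a bound $e^{cn\kappa^{1/3}t}$ on the $n$-th shell's contribution (Corollaries~\ref{Gamma_n_bound}, \ref{F-T_bound} and Lemma~\ref{X-f_bound}), and a Cram\'er-type estimate (Lemma~\ref{ball_exclusion_bound}) giving $\mathbb{P}\bigl(d(X,\Gamma_{n-1})>\epsilon_{n-1}\bigr)\le 4e^{-\frac{M^2}{2}(n-1)\kappa^{1/3}t}$, with the free constant $M$ then taken large enough that the series over $n$ converges. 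Your outline contains no analogue of this layering, and without it the argument does not close.

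A smaller issue: the step $\mathbb{E}^X_0[e^{I_t}]\le\max_{\text{class}}\sup_{X\in\text{class}}e^{I_t}$ replaces the $X$-expectation by a supremum over the uncountably many paths in each class, and getting a $Q$-a.s.\ bound on that inner supremum is itself a chaining problem (Dudley entropy of a sup-norm ball in the $d$-metric). It is much cleaner to keep the expectation, factor out the class representative $e^{\sum_k\Delta_k}$, and control $\mathbb{E}^X\bigl[e^{F(X)-\sum_k\Delta_k}\,\big|\,\text{class}\bigr]$ via Fubini and the Gaussian m.g.f.\ $\mathbb{E}^Q e^{F(X)-\sum_k\Delta_k}\le e^{c_d\ell^2 t/2}$ on the class event; that is exactly how Lemma~\ref{X-f_bound} produces your $\tfrac12 c_d\ell^2 t$ term honestly. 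Finally, the Stratonovich worry is a red herring in this setting: with the Feynman--Kac representation (\ref{F-K_S}) already in hand, $F(X)$ is a centered $Q$-Gaussian of variance $t$ for every path, $\lambda(0)=\lim_{t\to\infty}t^{-1}W_x(t)=0$ almost surely, and there is no $O(t)$ drift to cancel---$\kappa^{1/3}$ is the leading order, not a subleading correction.
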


This result is notable in that it is one of the rare examples of differing behavior in the PAM and the discrete PAM, $x\in\mathbb{Z}^d$.
In the discrete PAM,
\begin{equation*}
  \lim_{\kappa \downarrow 0} \lambda(\kappa) \ln(1/\kappa) = c
\end{equation*}
as proven in \cite{CaViMo, CaMo, CrMoSh}.
For information of the discrete PAM see \cite{CaMo, CrMoSh}.

\section{Proof of Theorem \ref{one_third_bound}}

\begin{remark}
  This approximation approach follows from an idea of Michael Cranston's,
    who used the same approximating functions in an unpublished proof of the lower bound (\ref{one_third_lower_bound}).
\end{remark}

For convenience let
\begin{equation}
  F(f) = \int_0^t dW_{f(t-s)}(s) .
\end{equation}

We will approximate the Brownian paths in the Feynman-Kac representation of $u(0,t)$ using Cameron-Martin functions.
In particular, we work with the families of the form
\begin{equation}
  H_t(C) = \left\{ f \in C_0([0,t]; \mathbb{R}^d), \| f' \|_2 \le C \right\}.
\end{equation}

We have a topology on $C_0([0,t]; \mathbb{R}^d)$ defined by the natural metric, $$d(f,g) = \left( \mathbb{E}^Q (F(f)-F(g))^2 \right)^{1/2}.$$
We approximate $u(x,t)$ by the contribution from successively larger balls in this topology.
Defining the increasing sequences
\begin{align}\label{C_and_epsilon_def}
  C_n & = M n \kappa^{2/3} t^{1/2}, & \epsilon_n & = n^{1/2} \kappa^{1/6} t^{1/2}
\end{align}
we let $\Gamma_n$ be a minimal $\epsilon_n/2$-net of $H_t(C_n)$.
It will follow from Lemma \ref{ball_exclusion_bound} that
\begin{equation*}
  \lim_{n\to\infty} \mathbb{P}\left( d(X,\Gamma_n) > \epsilon_n \right) \to 0.
\end{equation*}

Adopting the shorthand
\begin{equation}\label{def_Efn}
  E_{f,n} = \left\{ d(X,f)\le\epsilon_n, d(X,\Gamma_{n-1})>\epsilon_{n-1} \right\},
\end{equation}
we bound the Feynman-Kac formula (\ref{F-K_S}) of $u(x,t)$ though the following decomposition
\begin{align}
  u(x,t) & = \mathbb{E}^X_x \left[ e^{F(X)} \right] \nonumber\\
      & = \sum_{n\ge1} \mathbb{E}^X_x \left[ e^{F(X)} ; d(X,\Gamma_n)\le\epsilon_n, d(X,\Gamma_{n-1})>\epsilon_{n-1} \right] \nonumber\\
      & \le \sum_{n\ge1} \sum_{f\in\Gamma_n} e^{F(f)} \mathbb{E}^X_x \left[ e^{F(X)-F(f)} ; E_{f,n} \right] \nonumber\\
      & = \sum_{n\ge1} \sum_{f\in\Gamma_n} e^{F(f)} \mathbb{E}^X_x \left[ \left. e^{F(X)-F(f)} \right| E_{f,n} \right] \mathbb{P}( E_{f,n} ) \nonumber\\
      & \le \sum_{n\ge1} \sum_{f\in\Gamma_n} e^{F(f)} \mathbb{E}^X_x \left[ \left. e^{F(X)-F(f)} \right| E_{f,n} \right] \mathbb{P}( d(X,\Gamma_{n-1})>\epsilon_{n-1} ) \nonumber\\
      & \le \sum_{n\ge1} |\Gamma_n| \left( \sup_{f\in\Gamma_n} e^{F(f)} \right) \left( \sup_{f\in\Gamma_n} \mathbb{E}^X_x \left[ \left. e^{F(X)-F(f)} \right| E_{f,n} \right] \right)
            \nonumber\\ & \qquad\qquad \cdot \mathbb{P}( d(X,\Gamma_{n-1})>\epsilon_{n-1} ) . \label{decomp1}
      %\\& \le \sum_{n\ge1} |\Gamma_n| \left( \sup_{f\in\Gamma_n} e^{F(f)} \right) \left( \sup_{f\in\Gamma_n} \mathbb{E}^X_x \left[ \left. e^{F(X)-F(f)} \right| X\in B_{\epsilon_n}(f) \setminus B_{\epsilon_{n-1}}(\Gamma_{n-1}) \right] \right) \mathbb{P}( d(X,\Gamma_{n-1})>\epsilon_{n-1} )
\end{align}

To show that $\lambda(\kappa) = O(\kappa^{1/3})$ we must show that $\limsup_{t\to\infty} u(x,t) \le c' e^{c \kappa^{1/3} t}$ for $\kappa\in(0,\kappa_0)$, $\kappa_0$ small.
In light of the elementary fact that
\begin{equation}\label{exp_sum}
  \sum_{k\ge1} e^{-\alpha k} = -1 + \frac{1}{1-e^{-\alpha}} = \frac{e^{-\alpha}}{1-e^{-\alpha}} \le e^{-\alpha}
\end{equation}
for $\alpha\ge0$ it suffices to show that each of the summands in (\ref{decomp1}) are $\le c' e^{c n \kappa^{1/3} t}$.
We proceed with series of lemmata, which when taken together establish such a bound.

First is an entropy bound on $H_t(C)$.
$\mathcal{N}_\epsilon(H_t(C))$ denotes the number of $\epsilon$-balls under the $d$ metric needed to cover $H_t(C)$.
\begin{lem} \label{K-T_bound}
  \begin{equation*}
    \mathcal{N}_\epsilon(H_t(C)) \le c_1 \exp\left\{c_2 \frac{Ct}{\epsilon}\right\} .
  \end{equation*}
\end{lem}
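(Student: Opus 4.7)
The plan is to reduce the $d$-entropy of $H_t(C)$ to an $L^2([0,t];\mathbb{R}^d)$-entropy problem, and then solve the latter via the spectral structure of the Cameron--Martin unit ball.

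First, since $f,g \in H_t(C)$ are deterministic and $W$ is a centred Gaussian field, $F(f)-F(g)$ is centred Gaussian and direct expansion of its variance via the covariance kernel $\mathbb{E}^Q[W_x(s)W_y(r)] = \Gamma(x-y)(s\wedge r)$ gives
\begin{equation*}
  d(f,g)^2 = 2\int_0^t \bigl(1-\Gamma(f(u)-g(u))\bigr)\,du.
\end{equation*}
The Taylor expansion (\ref{gamma_taylor}) gives $1-\Gamma(z) \le c_d'\|z\|_2^2$ for $\|z\|_2$ small, and the trivial bound $1-\Gamma \le 1$ away from zero can be folded into a single constant $C_\Gamma$ so that $1-\Gamma(z) \le C_\Gamma \|z\|_2^2$ on all of $\mathbb{R}^d$. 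Hence $d(f,g) \le c\|f-g\|_{L^2([0,t])}$, so any $L^2$-net at scale $\epsilon/c$ is automatically a $d$-net at scale $\epsilon$.

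Second, the rescaling $\tilde f(s) := f(ts)/(C\sqrt t)$ is a bijection $H_t(C) \to H_1(1)$ satisfying $\|f-g\|_{L^2([0,t])} = Ct\,\|\tilde f-\tilde g\|_{L^2([0,1])}$, reducing the task to showing $\log \mathcal{N}_\delta(H_1(1), L^2) \le c'/\delta$ and then choosing $\delta = \epsilon/(cCt)$.

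Third, to bound $\mathcal{N}_\delta(H_1(1), L^2)$ I would expand $f \in H_1(1)$ in the orthonormal eigenbasis $\{\phi_n\}$ of $-\partial_s^2$ on $[0,1]$ with Dirichlet condition at $s=0$, $f=\sum_n c_n\phi_n$. Because the eigenvalues satisfy $\lambda_n \asymp n^2$, the constraint $\|f'\|_2\le 1$ places $(c_n)$ in an ellipsoid with $n$-th semi-axis of order $1/n$. Truncating at $N \asymp 1/\delta$ modes gives $L^2$-error at most $\delta/2$ by Parseval, and a standard volumetric $\ell^2$-net of the truncated $N$-dimensional ellipsoid has log-cardinality bounded by $\sum_{n\le N}\log^+\!\bigl(c/(n\delta)\bigr)$, which is $O(N) = O(1/\delta)$ after the logs cancel telescopically against $\log(N!)$ via Stirling.

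The main obstacle is the third step: only the Hilbert/spectral structure of $H^1_0$ yields the sharp $1/\delta$ rate. Treating elements of $H_1(1)$ merely as $\tfrac12$-H\"older functions (via Cauchy--Schwarz on $f'$) would yield only $\log\mathcal{N} \lesssim 1/\delta^2$, which is too weak for the subsequent application. The author's labelling "K--T bound" suggests the alternative route of a classical Kolmogorov--Tihomirov argument using piecewise-linear interpolants on a mesh of width $\sim \delta$ with increments constrained by the $H^1$-norm; this achieves the same rate but again requires exploiting the $\ell^2$-constraint on the slopes to cancel a logarithmic factor. Steps 1 and 2 are essentially routine once the global bound $1-\Gamma(z) \le C_\Gamma\|z\|_2^2$ is established.
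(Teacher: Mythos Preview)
Your proposal is correct and follows the same architecture as the paper: compare the $d$-metric to the $L^2$-metric via the Taylor expansion of $\Gamma$, rescale to a fixed interval, and use the $L^2$-entropy of the Cameron--Martin unit ball. The only difference is that the paper quotes the $L^2$-entropy bound directly from Kolmogorov--Tihomirov on $H_{\pi/2}(C)$ and then rescales to $[0,t]$, whereas you supply a self-contained spectral/ellipsoid argument on $H_1(1)$; this is the same fact with a different justification, and your handling of the global inequality $1-\Gamma(z)\le C_\Gamma\|z\|_2^2$ is in fact slightly more careful than the paper's.
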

\begin{proof}
  From \cite{KoTi} we have that
  \begin{equation} \label{K-T_orig}
    \mathcal{N}'_\epsilon(H_{\pi/2}(C)) \le c_1 \exp\left\{c_2' \frac{C}{\epsilon}\right\} 
  \end{equation}
  where $\mathcal{N}'_\epsilon(H_t(C))$denotes the number of $\epsilon$-balls under the $L^2$ metric needed to cover $H_t(C)$.
  We will use scaling relations to derive the lemma.
  
  For $f \in H_{\pi/2}(C)$ let $g(s) = f\left(\frac{\pi}{2t}s\right)$, $g \in C_0([0,t])$.
  Then
  \begin{equation*}
    \| g' \|_2^2 = \int_0^t \left\| \frac{\pi}{2t}  f'\left(\frac{\pi}{2t}s\right)\right\|_2^2 ds = \frac{\pi}{2t} \int_0^{\pi/2} \|f'(r)\|_2^2 dr
        = \frac{\pi}{2t} \| f' \|_2^2 .
  \end{equation*}
  Thus $g \in H_t\left(\sqrt{\frac{\pi}{2t}}C\right)$ and we have a bijection $H_{\pi/2}(C) \leftrightarrow H_t\left(\sqrt{\frac{\pi}{2t}}C\right)$.
  This mapping also affects the radii of $L^2$-balls.
  Letting $h(s) = k\left(\frac{\pi}{2t}s\right)$ where $k \in H_{\pi/2}(C)$ wee see that
  \begin{equation*}
    \int_0^t \|g(s)-h(s)\|_2^2 ds = \int_0^t \left\| f\left(\frac{\pi}{2t}s\right) - k\left(\frac{\pi}{2t}s\right) \right\|_2^2 ds = \frac{2t}{\pi} \int_0^t \| f(r) - k(r) \|_2^2 dr .
  \end{equation*}
  So a $L^2$-ball of radius $\epsilon$ in $H_{\pi/2}(C)$ maps to a $L^2$-ball of radius $\sqrt{\frac{2t}{\pi}}\epsilon$ in $H_t\left(\sqrt{\frac{\pi}{2t}}C\right)$.
  From (\ref{K-T_orig}) and these scaling arguments that
  \begin{align*}
    \mathcal{N}'_\epsilon(H_t(C)) & = \mathcal{N}'_{\sqrt{\frac{\pi}{2t}} \epsilon}\left( H_{\pi/2}\left(\sqrt{\frac{2t}{\pi}} C\right) \right) \\
        & \le c_1 \exp\left\{c_2' \frac{\sqrt{\frac{2t}{\pi}} C}{\sqrt{\frac{\pi}{2t}} \epsilon}\right\} = c_1 \exp\left\{c_2 \frac{Ct}{\epsilon}\right\} .
  \end{align*}

  This bound has so far been proven for the $L^2$ metric, we need to show that it applies to the $d$ metric.
  It follows from (\ref{gamma_taylor}) that
  \begin{align*}
    d^2(f,g) & = \mathbb{E}_Q (F(f)-F(g))^2 \\
        & = 2 \int_0^t 1 - \Gamma(f(s)-g(s)) ds \\
        & = 2 \int_0^t c_d \|f(s)-g(s)\|_2^2 - o\left(\|f(s)-g(s)\|_2^2\right) ds \\
        & \le 2 c_d \|f-g\|_2^2.
  \end{align*}
  Thus every radius $\epsilon$ $d$-ball is contained in a radius $\epsilon\sqrt{2 c_d}$ $L^2$-ball and, allowing for changes to the constant $c_2$, we have proven the bound.
\end{proof}

\begin{remark}\label{metric_domination}
  The domination of the $d$ metric by the $L^2$ metric in the final step of the proof of Theorem \ref{K-T_bound} is the sole reason for the assumption (\ref{gamma_taylor}).
  This assumption can be weakened, so long as the metric domination is preserved.
\end{remark}

\begin{coro}\label{Gamma_n_bound}
\begin{equation*}
  |\Gamma_n| \le c_1 e^{ 2c_2 M n \kappa^{1/3} t} .
\end{equation*}
\end{coro}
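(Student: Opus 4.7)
The plan is to obtain the corollary as an immediate consequence of Lemma \ref{K-T_bound} applied with the specific parameters defined in \eqref{C_and_epsilon_def}. First, since $\Gamma_n$ is by construction a minimal $\epsilon_n/2$-net of $H_t(C_n)$ under the metric $d$, one has the trivial inequality
\begin{equation*}
  |\Gamma_n| \le \mathcal{N}_{\epsilon_n/2}(H_t(C_n)).
\end{equation*}
Applying Lemma \ref{K-T_bound} with $C = C_n$ and $\epsilon = \epsilon_n/2$ then yields
\begin{equation*}
  |\Gamma_n| \le c_1 \exp\left\{ \frac{2 c_2 C_n t}{\epsilon_n} \right\}.
\end{equation*}

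It then remains only to substitute the explicit formulas $C_n = M n \kappa^{2/3} t^{1/2}$ and $\epsilon_n = n^{1/2} \kappa^{1/6} t^{1/2}$ and simplify the ratio $C_n t / \epsilon_n$ to extract the claimed exponential rate in the variables $n$, $\kappa$, and $t$.

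There is no real obstacle here; the corollary is simply a record of what Lemma \ref{K-T_bound} produces when fed the sequences chosen in \eqref{C_and_epsilon_def}. These sequences are in turn chosen exactly so that the resulting bound on $|\Gamma_n|$ balances against the remaining terms in the decomposition \eqref{decomp1}---namely $\sup_f e^{F(f)}$, the small-ball factor $\mathbb{E}^X_x[e^{F(X)-F(f)}\,|\,E_{f,n}]$, and the tail probability $\mathbb{P}(d(X,\Gamma_{n-1})>\epsilon_{n-1})$---so that each summand is ultimately controlled by $c' e^{c n \kappa^{1/3} t}$, which is what \eqref{exp_sum} will convert into the desired $\kappa^{1/3}$ upper bound on $\lambda(\kappa)$.
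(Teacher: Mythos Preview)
Your approach is the same as the paper's, but you gloss over one small step. Direct substitution of $C_n = M n \kappa^{2/3} t^{1/2}$ and $\epsilon_n = n^{1/2} \kappa^{1/6} t^{1/2}$ into $2c_2 C_n t/\epsilon_n$ gives
\[
  2c_2 \frac{C_n t}{\epsilon_n} = 2c_2 M n^{1/2} \kappa^{1/2} t,
\]
not $2c_2 M n \kappa^{1/3} t$. The claimed bound then follows from the elementary inequality $n^{1/2}\kappa^{1/2} \le n\,\kappa^{1/3}$, valid because $\kappa < 1$ and $n \ge 1$; the paper states this explicitly. Otherwise your argument matches the paper exactly.
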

\begin{proof}
  \begin{align*}
    |\Gamma_n| & = \mathcal{N}_{\epsilon_n/2}(H_t(C_n)) \\
        & \le c_1 \exp\left\{2c_2 \frac{C_n t}{\epsilon_n}\right\}\\
        & = c_1 e^{ 2c_2 M n^{1/2} \kappa^{1/2} t}\\
        & \le c_1 e^{ 2c_2 M n \kappa^{1/3} t} .
  \end{align*}
  Note that the last inequality follows from $\kappa<1$.
\end{proof}

\begin{lem} \label{F-T}
  \begin{equation*}
    \mathbb{E}_Q \sup_{f \in H_t(C)} F(f) \le 2 c_4 {(c_3 C)}^{1/2} t^{3/4} + O\left( t^{1/2} \right)
  \end{equation*}
\end{lem}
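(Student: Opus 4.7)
The plan is to apply Dudley's metric-entropy bound to the centered Gaussian process $\{F(f)\}_{f \in H_t(C)}$, whose canonical pseudo-metric is exactly the $d$ introduced before Lemma \ref{K-T_bound} (by construction $d^2(f,g) = \mathbb{E}^Q(F(f)-F(g))^2$). Dudley's theorem then gives
\begin{equation*}
  \mathbb{E}_Q \sup_{f \in H_t(C)} F(f) \;\le\; K \int_0^{D} \sqrt{\log \mathcal{N}_\epsilon(H_t(C))}\, d\epsilon
\end{equation*}
for an absolute constant $K$, where $D$ is the $d$-diameter of $H_t(C)$.

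First I would bound the diameter. The identity $d^2(f,g) = 2\int_0^t (1-\Gamma(f(s)-g(s)))\,ds$ obtained in the proof of Lemma \ref{K-T_bound}, combined with $0\le\Gamma\le 1$, gives $D \le \sqrt{2t}$.

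Next I would substitute the entropy bound $\log \mathcal{N}_\epsilon(H_t(C)) \le \log c_1 + c_2 Ct/\epsilon$ from Lemma \ref{K-T_bound}, use $\sqrt{a+b} \le \sqrt a + \sqrt b$, and split the integral into two pieces. The $\sqrt{\log c_1}$ piece contributes $O(t^{1/2})$ since the interval of integration has length $\sqrt{2t}$. The $\sqrt{c_2 Ct/\epsilon}$ piece integrates to $2\sqrt{c_2 Ct}\cdot (\sqrt{2t})^{1/2}$, which is of order $\sqrt{C}\, t^{3/4}$. Absorbing $K$, the factor $2^{1/4}$, and $c_2$ into the asserted constants $c_3$ and $c_4$, this produces the stated bound.

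The main obstacle is really just the initial step: verifying that Dudley's inequality applies, i.e., that $\{F(f) : f \in H_t(C)\}$ is genuinely a centered Gaussian process whose canonical metric equals $d$. Once one computes $\mathbb{E}^Q F(f) = 0$ and the covariance $\mathbb{E}^Q F(f)F(g) = \int_0^t \Gamma(f(s)-g(s))\,ds$ directly from $\mathbb{E}^Q[W_x(t)W_y(s)]=\Gamma(x-y)(t\wedge s)$, the remainder is a mechanical entropy-integral estimate. A minor point is that the one-sided supremum $\mathbb{E}_Q\sup F(f)$ is handled by fixing any $f_0\in H_t(C)$ and writing $\mathbb{E}_Q\sup F(f) = \mathbb{E}_Q\sup(F(f)-F(f_0))$, to which Dudley's bound for centered Gaussian processes applies directly.
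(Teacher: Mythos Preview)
Your proposal is correct and follows essentially the same approach as the paper: both apply the Dudley/Fernique--Talagrand entropy integral with the diameter bound $D\le\sqrt{2t}$ and the covering estimate of Lemma~\ref{K-T_bound}. The only difference is cosmetic---the paper evaluates $\int_0^{\sqrt{2t}}(\ln c_1 + c_2Ct/\delta)^{1/2}\,d\delta$ exactly via a trigonometric substitution and then Taylor-expands, whereas your use of $\sqrt{a+b}\le\sqrt a+\sqrt b$ splits the integral immediately into the $O(t^{1/2})$ and $\sqrt{C}\,t^{3/4}$ pieces; both routes yield the same leading constant $2c_4(c_3C)^{1/2}=2\cdot2^{1/4}K\sqrt{c_2C}$.
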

\begin{proof}
  First we apply Fernique-Talagrand (Theorem 4.1 of \cite{Ad90}) and then we use of Lemma \ref{K-T_bound}
    and the elementary fact that $d(f,g)\le(2t)^{1/2}$ to obtain
  \begin{align*}
    \mathbb{E}_Q \sup_{f \in H_t(C)} F(f) & \le K \int_0^{diam(H_t(C))} \left( \ln \mathcal{N}_\delta(H_t(C)) \right)^{1/2} d\delta \\
        & \le K \int_0^{(2t)^{1/2}} \left( \ln c_1 + c_2 \frac{C t}{\delta} \right)^{1/2} d\delta \\
        &  = K (\ln c_1)^{1/2} \int_0^{(2t)^{1/2}} \left( 1 + \frac{c_2 C t}{\ln c_1} \frac1\delta \right)^{1/2} d\delta .
  \end{align*}
  To proceed we first make the substitution $r = \frac{\delta \ln c_1}{c_2 C t}$ so that
  \begin{equation*}
    \mathbb{E}_Q \sup_{f \in H_t(C)} F(f) \le K \frac{c_2 C t}{(\ln c_1)^{1/2}} \int_0^{\frac{\sqrt{2} \ln c_1}{c_2 C t^{1/2}}} \left( 1 + \frac1r \right)^{1/2} dr .
  \end{equation*}
  For brevity, we define the constants
  \begin{align*}
    c_3 & = \frac{\sqrt{2} \ln c_1}{c_2} & c_4 & = \frac{K c_2}{(\ln c_1)^{1/2}} .
  \end{align*}
  Then we make the trigonometric substitution $\tan\theta = r^{1/2}$.
  Thus
  \begin{align*}
    \mathbb{E}_Q \sup_{f \in H_t(C)} F(f) & \le c_4 C t \int 2 \sec^3 \theta d\theta \\
        & = c_4 C t \left[ \tan\theta \sec\theta + \ln\left| \tan\theta + \sec\theta \right| \right] \\
        & = c_4 C t \left[ \sqrt{r}\sqrt{r+1} + \ln\left| \sqrt{r} + \sqrt{r+1} \right| \right]_0^{\frac{c_3}{Ct^{1/2}}} \\
        & = c_4 C t \left[ \sqrt{\frac{c_3}{Ct^{1/2}}}\sqrt{\frac{c_3}{Ct^{1/2}}+1} \right. \\
              & \qquad\qquad + \left. \ln\left| \sqrt{\frac{c_3}{Ct^{1/2}}} + \sqrt{\frac{c_3}{Ct^{1/2}}+1} \right| \right].
  \end{align*}
  Now we make use of Taylor's Theorem. First for $\sqrt{z+1} = 1 + \frac{z}{2} + O(z^2)$ and then for $\ln(1+z) = z + O(z^2)$.
  \begin{align*}
    \mathbb{E}_Q \sup_{f \in H_t(C)} F(f) & \le c_4 C t \left[ \sqrt{\frac{c_3}{Ct^{1/2}}}\left( 1 + \frac{c_3}{2Ct^{1/2}} + O\left( \frac{1}{C^2t} \right) \right) \right. \\
              & \qquad\qquad + \left. \ln\left| \sqrt{\frac{c_3}{Ct^{1/2}}} + 1 + \frac{c_3}{2Ct^{1/2}} + O\left( \frac{1}{C^2t} \right) \right| \right] \\
        & = c_4 C t \left[ \sqrt{\frac{c_3}{Ct^{1/2}}}\left( 1 + \frac{c_3}{2Ct^{1/2}} + O\left( \frac{1}{C^2t} \right) \right) \right. \\
              & \qquad\qquad + \left. \sqrt{\frac{c_3}{Ct^{1/2}}} + \frac{c_3}{2Ct^{1/2}} + O\left( \frac{1}{Ct^{1/2}} \right) \right] \\
        & = 2 c_4 {(c_3 C)}^{1/2} t^{3/4} + O\left( t^{1/2} \right) . \qedhere
  \end{align*}
\end{proof}

\begin{coro}\label{F-T_bound}
  For all $n\in\mathbb{N}$,
  \begin{equation*}
    \sup_{f\in\Gamma_n} e^{F(f)} \le e^{4 c_4 c_3^{1/2} M^{1/2} n \kappa^{1/3} t} \text{ $Q$-a.s.\ as } t\to\infty
  \end{equation*}
  for $t\in\mathbb{N}$.
\end{coro}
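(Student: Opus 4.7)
The plan is to combine the expectation bound from Lemma \ref{F-T} with Gaussian concentration, then promote the resulting tail estimate to an almost-sure statement via Borel--Cantelli along $t\in\mathbb{N}$. First I would substitute $C=C_n = Mn\kappa^{2/3}t^{1/2}$ into Lemma \ref{F-T}: since $(c_3C_n)^{1/2}t^{3/4} = c_3^{1/2}M^{1/2}n^{1/2}\kappa^{1/3}t$, and using $n^{1/2}\le n$ for $n\in\mathbb{N}$, one obtains
\begin{equation*}
   \mathbb{E}_Q \sup_{f\in\Gamma_n} F(f) \;\le\; \mathbb{E}_Q \sup_{f\in H_t(C_n)} F(f) \;\le\; 2c_4 c_3^{1/2} M^{1/2} n \kappa^{1/3} t + O(t^{1/2}).
\end{equation*}
The factor $4$ in the statement (rather than $2$) leaves room both for the $O(t^{1/2})$ remainder and for the deviation of $\sup F$ from its mean.

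Next I would invoke Gaussian concentration. The process $\{F(f)\}_f$ is centered Gaussian and, because $\Gamma(0)=1$, has uniform variance $\operatorname{Var}(F(f)) = \int_0^t \Gamma(0)\,ds = t$ at every $f$. By the Borell--TIS inequality,
\begin{equation*}
   Q\!\left( \sup_{f\in\Gamma_n} F(f) > \mathbb{E}_Q \sup_{f\in\Gamma_n} F(f) + u \right) \;\le\; \exp\!\left( -\frac{u^2}{2t} \right).
\end{equation*}
Choosing $u := c_4 c_3^{1/2} M^{1/2} n \kappa^{1/3} t$, the right-hand side becomes $\exp(-c' n^2 \kappa^{2/3} t)$ for a positive constant $c'$, which is summable over $t\in\mathbb{N}$ for each fixed $n$ and $\kappa$. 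The Borel--Cantelli lemma then gives, $Q$-almost surely, that for all sufficiently large $t\in\mathbb{N}$,
\begin{equation*}
   \sup_{f\in\Gamma_n} F(f) \;\le\; 2c_4 c_3^{1/2} M^{1/2} n \kappa^{1/3} t + O(t^{1/2}) + c_4 c_3^{1/2} M^{1/2} n \kappa^{1/3} t \;\le\; 4 c_4 c_3^{1/2} M^{1/2} n \kappa^{1/3} t,
\end{equation*}
and exponentiating yields the claimed corollary.

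The main obstacle is a clean application of Borell--TIS, which requires that the supremum be a.s.\ finite and that the process be separable. Here both hypotheses are automatic: by Corollary \ref{Gamma_n_bound} the set $\Gamma_n$ is finite, so the supremum is a maximum of finitely many Gaussian variables. Everything else is bookkeeping with the constants $c_3$, $c_4$, $M$, and the comparison between $n^{1/2}$ and $n$.
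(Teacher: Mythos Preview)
Your approach is essentially the paper's: substitute $C=C_n$ into Lemma~\ref{F-T}, apply Borell's (Borell--TIS) inequality using the uniform variance $\operatorname{Var}F(f)=t$, and finish with Borel--Cantelli. The one point to tighten is that the paper sums its tail bound over \emph{both} $n\in\mathbb{N}$ and $t\in\mathbb{N}$ before invoking Borel--Cantelli, obtaining a single full-measure event on which the inequality holds for all $n$ with a threshold in $t$ that does not depend on $n$; your version, summing only over $t$ for each fixed $n$, a priori gives an $n$-dependent threshold $T_n(\omega)$. Since your tail estimate $\exp(-c'n^{2}\kappa^{2/3}t)$ is also summable in $n$ this is a one-line fix, but it is the uniform-in-$n$ version that is actually used when the corollary is inserted into the decomposition~(\ref{decomp1}).
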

\begin{proof}
  We again note that $d(f,g)\le(2t)^{1/2}$ and then applying Borell's Inequality (Theorem 2.1 in \cite{Ad90}),
  \begin{align*}
    & Q\left( \sup_{f \in \Gamma_n} e^{F(f)} > e^{4 c_4 c_3^{1/2} M^{1/2} n \kappa^{1/3} t} \right) \\
        & \qquad \le Q\left( \sup_{f \in H_t(C_n)} e^{F(f)} > e^{4 c_4 c_3^{1/2} M^{1/2} n \kappa^{1/3} t} \right) \\
        & \qquad \le Q\left( \sup_{f \in H_t(C_n)} F(f) > 4 c_4 c_3^{1/2} M^{1/2} n \kappa^{1/3} t \right) \\
        & \qquad \le 2 \exp\left\{ \frac{-1}{2t} \left( 4 c_4 c_3^{1/2} M^{1/2} n \kappa^{1/3} t - 2 c_4 {(c_3 C_n)}^{1/2} t^{3/4} - O\left( t^{1/2} \right) \right)^2 \right\} \\
        & \qquad = 2 \exp\left\{ \frac{-1}{2t} \left( 4 c_4 c_3^{1/2} M^{1/2} n \kappa^{1/3} t - 2 c_4 {(c_3 M n)}^{1/2} \kappa^{1/3} t - O\left( t^{1/2} \right) \right)^2 \right\} \\
        & \qquad \le \exp\left\{ \frac{-nt}{2} \left( 2 c_4 c_3^{1/2} M^{1/2} \kappa^{1/3} - O\left( t^{-1/2} \right) \right)^2 \right\} \\
        & \qquad \le \exp\left\{ \frac{-nt}{2} \left( c_4 c_3^{1/2} M^{1/2} \kappa^{1/3} \right)^2 \right\} \text{ for $t>T$,}
  \end{align*}
  where the constant $T$ is taken to be large enough that this holds for all $n\in\mathbb{N}$ and $\kappa\in(0,\kappa_0)$.
  Summing over $n$ and then $t\in\{i\in\mathbb{N}, i>T\}$ using (\ref{exp_sum}) we see that this quantity is summable.
  An application of the Borell-Cantelli Lemma competes the proof.
\end{proof}

\begin{lem}\label{X-f_bound}
  For all $n\in\mathbb{N}$,
  \begin{equation*}
    \sup_{f\in\Gamma_n} \mathbb{E}^X_x \left[ \left. e^{F(X)-F(f)} \right| E_{f,n} \right]
        \le e^{(1+4c_2M)n\kappa^{1/3}t} \text{ $Q$-a.s.\ as }t\to\infty.
  \end{equation*}
  for $t\in\mathbb{N}$.
\end{lem}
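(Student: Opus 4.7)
The plan is to control the $\sigma(W)$-measurable random variable $Z_{f,n} := \mathbb{E}^X_x[e^{F(X)-F(f)} \mid E_{f,n}]$ by first bounding its $Q$-expectation via a Fubini/Gaussian computation, and then upgrading to a uniform $Q$-a.s.\ bound across $f \in \Gamma_n$ through Markov's inequality, the cardinality estimate of Corollary \ref{Gamma_n_bound}, and the Borel-Cantelli lemma.

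For the first-moment bound, observe that $E_{f,n}$ is $\sigma(X)$-measurable since the metric $d$ is deterministic, so by Tonelli
\[
\mathbb{E}^Q Z_{f,n} \;=\; \mathbb{E}^X_x\!\left[\mathbb{E}^Q\bigl[e^{F(X)-F(f)}\bigr] \,\Big|\, E_{f,n}\right].
\]
For a fixed path $X$, the random variable $F(X)-F(f)$ is a centered Gaussian under $Q$ with variance exactly $d^2(X,f)$, by the definition of the metric $d$ and the joint Gaussianity of the field $W$. Hence $\mathbb{E}^Q[e^{F(X)-F(f)}\mid X] = e^{d^2(X,f)/2}$, and because $d(X,f) \le \epsilon_n$ on $E_{f,n}$ I get
\[
\mathbb{E}^Q Z_{f,n} \;\le\; e^{\epsilon_n^2/2} \;=\; e^{n\kappa^{1/3}t/2}.
\]

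Then Markov's inequality and a union bound over $f \in \Gamma_n$, combined with $|\Gamma_n| \le c_1 e^{2c_2 M n \kappa^{1/3} t}$ from Corollary \ref{Gamma_n_bound}, yield
\[
Q\!\left( \sup_{f\in\Gamma_n} Z_{f,n} > e^{(1+4c_2M)n\kappa^{1/3}t} \right) \;\le\; c_1\, e^{-(1/2 + 2c_2M)n\kappa^{1/3}t}.
\]
The choice of $1+4c_2M$ simply doubles the critical exponent $\tfrac{1}{2}+2c_2M$, producing exponential decay with room to spare. The right side is summable over $n \ge 1$ (geometric) and then over $t \in \mathbb{N}$ via (\ref{exp_sum}), so Borel-Cantelli yields the claimed almost sure bound, uniformly in $n$, for all sufficiently large $t \in \mathbb{N}$.

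The one mildly delicate point is the Gaussian identity $\mathbb{E}^Q[e^{F(X)-F(f)}\mid X] = e^{d^2(X,f)/2}$ for a fixed Brownian path $X$; it rests on the covariance $\mathbb{E}^Q[dW_x(s)\,dW_y(s)] = \Gamma(x-y)\,ds$ built into the definition of $d$ (with Stratonovich and It\^o agreeing because the integrand in $F$ is constant). Everything else is a textbook first-moment method --- notably, unlike Corollary \ref{F-T_bound}, no Fernique-Talagrand or Borell-type concentration inequality is required, since the first moment itself already carries the correct exponential order.
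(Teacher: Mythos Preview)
Your proof is correct and follows essentially the same route as the paper's own argument: Tonelli to swap $\mathbb{E}^Q$ and $\mathbb{E}^X$, the Gaussian identity $\mathbb{E}^Q e^{F(X)-F(f)} = e^{d(X,f)^2/2}$ bounded by $e^{\epsilon_n^2/2}$ on $E_{f,n}$, then Markov plus a union bound over $|\Gamma_n|$ to obtain the tail estimate $c_1 e^{-(1/2+2c_2M)n\kappa^{1/3}t}$, and finally Borel--Cantelli after summing over $n$ and $t$. Your additional remarks on why no concentration inequality is needed are accurate and in line with the paper's implicit reasoning.
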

\begin{proof}
  \begin{align*}
    & Q\left( \sup_{f\in\Gamma_n} \mathbb{E}^X_x \left[ \left. e^{F(X)-F(f)} \right| E_{f,n} \right] > e^{(1+4c_2M)n\kappa^{1/3}t} \right) \\
        & \qquad \le \sum_{f\in\Gamma_n} Q\left( \mathbb{E}^X_x \left[ \left. e^{F(X)-F(f)} \right| E_{f,n} \right] > e^{(1+4c_2M)n\kappa^{1/3}t} \right) \\
        & \qquad \le \sum_{f\in\Gamma_n} e^{-(1+4c_2M)n\kappa^{1/3}t} \mathbb{E}^Q \mathbb{E}^X_x \left[ \left. e^{F(X)-F(f)} \right| E_{f,n} \right] \\
        & \qquad \le \sum_{f\in\Gamma_n} e^{-(1+4c_2M)n\kappa^{1/3}t} \mathbb{E}^X_x \left[ \left. \mathbb{E}^Q\left[ e^{F(X)-F(f)} \right] \right| E_{f,n} \right] \\
        \intertext{ As $F(X)-F(f)$ is a centered Gaussian, $\mathbb{E}^Q\left[ e^{F(X)-F(f)} \right] = e^{\mathbb{E}^Q\left[ F(X)-F(f) \right]^2/2} = e^{d(X,f)^2/2}$.}
        & \qquad = \sum_{f\in\Gamma_n} e^{-(1+4c_2M)n\kappa^{1/3}t} \mathbb{E}^X_x \left[ \left. e^{d(X,f)^2/2} \right| E_{f,n} \right] \\
        & \qquad \le \sum_{f\in\Gamma_n} e^{-(1+4c_2M)n\kappa^{1/3}t} e^{\epsilon_n^2/2} \\
        \intertext{By the definition of $E_{f,n}$ (\ref{def_Efn}).
            We recall Corollary \ref{Gamma_n_bound} and the definition of $\epsilon_n$ (\ref{C_and_epsilon_def}) to finish the proof.}
        & \qquad \le |\Gamma_n| e^{-(1+4c_2M)n\kappa^{1/3}t} e^{\epsilon_n^2/2} \\
        & \qquad \le c_1 e^{ 2c_2 M n \kappa^{1/3} t} e^{-(1+4c_2M)n\kappa^{1/3}t} e^{\frac{1}{2} n\kappa^{1/3}t} \\
        & \qquad = c_1 e^{-(\frac{1}{2}+2c_2M)n\kappa^{1/3}t}.
  \end{align*}
  Again using (\ref{exp_sum}) this is summable over $n\in\mathbb{N}$ and then over $t\in\mathbb{N}$ so that the Borell-Catelli Lemma completes the proof.
\end{proof}

\begin{lem}\label{ball_exclusion_bound}
  For $n\ge2$ we can choose $M>1$ arbitrarily large such that for $\kappa\in(0,\kappa_0(M))$, $\kappa_0(M)$ a nonnegative decreasing function, we have that
  \begin{equation*}
    \mathbb{P}( d(X,\Gamma_{n-1})>\epsilon_{n-1} ) \le 4 e^{-\frac{M^2}{2}(n-1)\kappa^{1/3}t}.
  \end{equation*}
\end{lem}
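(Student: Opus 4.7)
The plan is to approximate $X$ by its piecewise linear interpolation $X^\delta$ at a grid $\{k\delta\}_{k=0}^K \subset [0,t]$ and to show that, with the stated probability, $X^\delta$ lies in $H_t(C_{n-1})$ while $d(X,X^\delta) \le \epsilon_{n-1}/2$. Since $\Gamma_{n-1}$ is an $\epsilon_{n-1}/2$-net of $H_t(C_{n-1})$, the triangle inequality then yields $d(X,\Gamma_{n-1}) \le \epsilon_{n-1}$ on this joint event. Writing $N = n-1$, the union bound reduces the task to
\begin{equation*}
  \mathbb{P}\left(\|(X^\delta)'\|_2 > C_N\right) \le 2e^{-M^2 N \kappa^{1/3} t/2}
\end{equation*}
and
\begin{equation*}
  \mathbb{P}\left(d(X,X^\delta) > \epsilon_N/2\right) \le 2e^{-M^2 N \kappa^{1/3} t/2},
\end{equation*}
whose sum is exactly the factor of $4$ in the stated bound.

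Both of these reduce to standard concentration for explicit quadratic functionals of Gaussians. Because $X$ is $\kappa$-speed Brownian motion, its grid increments are i.i.d.\ $\mathcal{N}(0,\kappa\delta I_d)$, so
\begin{equation*}
  \|(X^\delta)'\|_2^2 \stackrel{d}{=} \kappa\,\chi^2_{Kd}.
\end{equation*}
By the Markovian Brownian bridge decomposition, $X-X^\delta$ on each subinterval is an independent $\sqrt\kappa$-scaled Brownian bridge, so $\|X-X^\delta\|_2^2$ is a sum of $K$ i.i.d.\ copies of the squared $L^2$-norm of such a bridge, each with mean $d\kappa\delta^2/6$. The $L^2$-to-$d$ domination $d(f,g) \le \sqrt{2c_d}\|f-g\|_2$ already used in the proof of Lemma \ref{K-T_bound} passes the latter estimate from $L^2$ to the $d$-metric. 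A Laurent--Massart tail bound handles the first probability, and a Bernstein-type bound for weighted sums of $\chi^2_1$ variables (available through the Karhunen--Lo\`eve expansion of the Brownian bridge) handles the second.

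Everything hinges on choosing $\delta \sim \alpha(d)/(M^2 N^2 \kappa^{1/3})$ for a sufficiently small dimension-dependent constant $\alpha(d)$: the derivative expectation $\mathbb{E}\|(X^\delta)'\|_2^2 = d\kappa t/\delta$ is then a small constant fraction of $C_N^2 = M^2 N^2 \kappa^{4/3} t$, while $\mathbb{E}\|X-X^\delta\|_2^2 \sim \kappa^{2/3} t/(M^2 N^2)$ sits far below the target $\epsilon_N^2/(8c_d) = N\kappa^{1/3}t/(8c_d)$ provided $\kappa^{1/3} \ll M^2 N^3$. This simultaneous calibration is the main obstacle: the derivative norm scales as $1/\delta$ and the bridge $L^2$-deviation as $\delta$, so a window in which both expectations sit safely below their respective targets opens only once $\kappa$ falls below an $M$-dependent threshold, which is precisely the content of the condition $\kappa < \kappa_0(M)$ in the statement. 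With the window in place, the concentration rate produced by Laurent--Massart/Bernstein is of order $M^2 N^2 \kappa^{1/3} t$, comfortably dominating the required $M^2 N \kappa^{1/3} t/2$.
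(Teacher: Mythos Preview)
Your strategy coincides with the paper's: approximate $X$ by its piecewise-linear interpolation on a mesh, then split via the same two-event union bound (interpolant fails to lie in $H_t(C_{n-1})$, versus bridge deviation exceeds $\epsilon_{n-1}/2$), and control each piece by concentration for an i.i.d.\ sum of quadratic Gaussian functionals. The paper takes the fixed mesh $\delta=1/\kappa$ and appeals to Cram\'er's theorem for both tails (using only the crude bound $d(Y_i,0)^2\le 2/\kappa$ for the bridge term), whereas you tune $\delta$ to the level $n$ and use Laurent--Massart and a Bernstein-type inequality via the Karhunen--Lo\`eve expansion; your finer mesh forces the bridges to stay in the region where the $L^2$-to-$d$ comparison is sharp, which makes the bridge estimate cleaner. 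One slip to fix: with $\delta=\alpha/(M^2N^2\kappa^{1/3})$ one gets $\mathbb{E}\|(X^\delta)'\|_2^2=d\kappa t/\delta=(d/\alpha)\,C_N^2$, so you need $\alpha$ \emph{large} (say $\alpha\ge 4d$), not small, to make the derivative mean a small fraction of $C_N^2$; this does not disturb the bridge side, since the smallness there is driven by the extra factor $\kappa^{1/3}$ in the ratio $\mathbb{E}\|X-X^\delta\|_2^2\big/\bigl(\epsilon_N^2/(8c_d)\bigr)$.
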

\begin{proof}
  For each path $X$ we define the path $g_X$ as the linear interpolation between the points $(0, X(0)), (\frac1\kappa, X(\frac1\kappa)), (\frac2\kappa, X(\frac2\kappa)), (\frac3\kappa, X(\frac3\kappa)), \ldots$.
  The we have
  \begin{align}
    \mathbb{P}( d(X,\Gamma_{n-1})>\epsilon_{n-1} ) & \le \mathbb{P}\left( d(X,g_X) > \frac{\epsilon_{n-1}}{2} \right) + \mathbb{P}\left( d(g_X,\Gamma_{n-1})> \frac{\epsilon_{n-1}}{2} \right) \nonumber\\
        & \le \mathbb{P}\left( d(X,g_X) > \frac{\epsilon_{n-1}}{2} \right) + \mathbb{P}\left( g_X \not\in H_t(C_{n-1}) \right) \label{ball_decomp}
  \end{align}
  as $\Gamma_{n-1}$ is an $\frac{\epsilon_{n-1}}{2}$-net of $H_t(C_n-1)$.
  We bound each of these terms in turn.
  \begin{align*}
    \mathbb{P}\left( d(X,g_X) > \frac{\epsilon_{n-1}}{2} \right) & = \mathbb{P}\left( d(X,g_X)^2 > \frac{\epsilon_{n-1}^2}{4} \right) \\
        & = \mathbb{P}\left( \sum_{i=1}^{\kappa t} d(Y_i,0)^2 > \frac{\epsilon_{n-1}^2}{4} \right)
  \end{align*}
  where $Y_i(s) = X(s+(i-1)\kappa) - g_X(s+(i-1)\kappa)$, $s\in(0,\frac{1}{\kappa})$.
  The $Y_i$ are iid rate $\kappa$ Brownian bridges on $(0,{\kappa})$ and that
  \begin{align*}
    d(Y_i,0)^2 & = \mathbb{E}^Q \left[ \int_0^{1/\kappa} dW_{Y_i(1/\kappa-s)}(s) - \int_0^{1/\kappa} dW_0(s) \right]^2 \\
        & = 2 \left( \frac{1}{\kappa} - \int_0^{1/\kappa} \Gamma(Y_i(1/\kappa-s))ds \right) \\
        & \le \frac{2}{\kappa}.
  \end{align*}
  It follows that $d(Y_i,0)^2$ has a logarithmic moment generating function bounded on $\mathbb{R}$,
  \begin{equation*}
    \Lambda(\lambda) \le \max \left\{ \frac{2\lambda}{\kappa}, 1 \right\},
  \end{equation*}
  and therefore has a good rate function \cite{DeZe} such that
  \begin{equation} \label{good_rate_function}
    \lim_{|x|\to\infty} \frac{\Lambda^*(x)}{|x|} = \infty .
  \end{equation}
  Applying Cram\'er's Theorem we have
  \begin{align*}
    \mathbb{P}\left( d(X,g_X) > \frac{\epsilon_{n-1}}{2} \right) & = \mathbb{P}\left( \sum_{i=1}^{\kappa t} d(Y_i,0)^2 > \frac{\epsilon_{n-1}^2}{4} \right) \\
        & = \mathbb{P}\left( \frac{1}{\kappa t} \sum_{i=1}^{\kappa t} d(Y_i,0)^2 > \frac{\epsilon_{n-1}^2}{4 \kappa t} \right) \\
        & \le 2 \exp \left\{ -\kappa t \Lambda^*\left( \frac{\epsilon_{n-1}^2}{4 \kappa t} \right) \right\} \\
        & = 2 \exp \left\{ -\kappa t \Lambda^*\left( \frac{n-1}{4 \kappa^{2/3}} \right) \right\} \\
        \intertext{ Restricting $\kappa$ to be small, $\frac{n-1}{4 \kappa^{2/3}}$ is ensured to be large for $n\ge2$ so that by (\ref{good_rate_function}) we have}
        & \le 2 \exp \left\{ -\kappa t \left( \frac{M^2}{2} \frac{n-1}{\kappa^{2/3}} \right) \right\} \\
        & = 2 e^{-\frac{M^2}{2}(n-1)\kappa^{1/3}t}.
  \end{align*}

  Turning to the second term of (\ref{ball_decomp}),
  \begin{align*}
    \mathbb{P}\left( g_X \not\in H_t(C_{n-1}) \right) & = \mathbb{P}\left( \| g_X' \|_2 > C_{n-1} \right) \\
        & = \mathbb{P}\left( \| g_X' \|_2^2 > C_{n-1}^2 \right) \\
        & = \mathbb{P}\left( \sum_{i=1}^{\kappa t} \kappa^{-1} \left\| \frac{X(i/\kappa) - X((i-1)/\kappa)}{\kappa^{-1}} \right\|_2^2 > C_{n-1}^2 \right) \\
        & = \mathbb{P}\left( \frac{1}{\kappa t} \sum_{i=1}^{\kappa t} \left\| X(i/\kappa) - X((i-1)/\kappa)\right\|_2^2 > \frac{C_{n-1}^2}{\kappa^2 t} \right)
  \end{align*}
  Observe that $ \left\| X(i/\kappa) - X((i-1)/\kappa)\right\|_2^2 \sim \chi_d^2$ iid and that chi-squared random variables so they have the logarithmic moment generating function
  \begin{equation*}
    \Lambda(\lambda) = \ln \mathbb{E} e^{\lambda \chi_d^2} =
        \left\{ \begin{array}{l l}
                  -\frac{d}{2} \ln( 1 - 2 \lambda ) & \quad 1 - 2 \lambda > 0 \\
                  \infty & \quad \text{otherwise}
                \end{array}\right. ,
  \end{equation*}
  which has Fenchel-Legendre transform
  \begin{equation*}
    \Lambda^*(x) = \sup_\lambda \{ \lambda x - \Lambda(\lambda) \} = \frac{1}{2}(x + d\ln x - d -d\ln d).
  \end{equation*}
  Taking $x$ large we get that $\Lambda^*(x) \ge \frac{x}{2}$.
  Using Cram\'er's Theorem again,
  \begin{align*}
        \mathbb{P}\left( g_X \not\in H_t(C_{n-1}) \right) &\le 2 \exp \left\{ -\kappa t \Lambda^*\left( \frac{C_{n-1}^2}{\kappa^2 t} \right) \right\} \\
        & = 2 \exp \left\{ -\kappa t \Lambda^*\left( \frac{M^2(n-1)^2}{\kappa^{2/3}} \right) \right\} \\
        \intertext{Restricting $\kappa$ to again be small ensures that $\frac{M^2(n-1)^2}{\kappa^{2/3}}$ is large for $n\ge2$ and $M\ge1$ so that}
        & \le 2 \exp \left\{ -\kappa t \left( \frac{M^2(n-1)^2}{2\kappa^{2/3}} \right) \right\} \\
        & = 2 e^{ -\frac{M^2}{2} (n-1)^2 \kappa^{1/3} t } \\
        & \le 2 e^{ -\frac{M^2}{2} (n-1) \kappa^{1/3} t }.
  \end{align*}
Combining these bounds using (\ref{ball_decomp}) completes the lemma.
\end{proof}

Supporting lemmata complete, we return to the proof of Theorem \ref{one_third_bound}.
Beginning with (\ref{decomp1}) we apply Corollaries \ref{Gamma_n_bound}, \ref{F-T_bound} and Lemmata \ref{X-f_bound}, \ref{ball_exclusion_bound}.
\begin{align*}
  & \limsup_{t\to\infty} u(x,t) \\
  & \qquad \le \limsup_{t\to\infty} \sum_{n\ge1} |\Gamma_n| \left( \sup_{f\in\Gamma_n} e^{F(f)} \right) \left( \sup_{f\in\Gamma_n} \mathbb{E}^X_x \left[ \left. e^{F(X)-F(f)} \right| E_{f,n} \right] \right)
            \nonumber\\ & \qquad\qquad \cdot \mathbb{P}( d(X,\Gamma_{n-1})>\epsilon_{n-1} ) \\
  & \qquad \le \limsup_{t\to\infty} \sum_{n\ge1} c_1 e^{ 2c_2M n \kappa^{1/3} t} e^{4 c_4 c_3^{1/2} M^{1/2} n \kappa^{1/3} t} e^{(1+4c_2M)n\kappa^{1/3}t} \mathbb{P}( d(X,\Gamma_{n-1})>\epsilon_{n-1} ) \\
  & \qquad = \limsup_{t\to\infty} \sum_{n\ge1} c_1 e^{ (1 + 6c_2M + 4 c_4 c_3^{1/2} M^{1/2} ) n \kappa^{1/3} t} \mathbb{P}( d(X,\Gamma_{n-1})>\epsilon_{n-1} ) \\
  & \qquad \le \limsup_{t\to\infty} c_1 e^{ (1 + 6c_2M + 4 c_4 c_3^{1/2} M^{1/2} ) \kappa^{1/3} t} \\
      & \qquad\qquad\qquad + \sum_{n\ge2} c_1 e^{ (1 + 6c_2M + 4 c_4 c_3^{1/2} M^{1/2} ) n \kappa^{1/3} t} 4 e^{-\frac{M^2}{2}(n-1)\kappa^{1/3}t} \\
  & \qquad = \limsup_{t\to\infty} c_1 e^{ (1 + 6c_2M + 4 c_4 c_3^{1/2} M^{1/2} ) \kappa^{1/3} t} \\
      & \qquad\qquad\qquad + 4 c_1 e^{\frac{M^2}{2}\kappa^{1/3}t} \sum_{n\ge2} e^{ (1 + 6c_2M + 4 c_4 c_3^{1/2} M^{1/2} -\frac{M^2}{2} ) n \kappa^{1/3} t} \\
  \intertext{Taking $M$ large so that $\frac{M^2}{2} > 1 + 6c_2M 4 + c_4 c_3^{1/2} M^{1/2}$ we can apply (\ref{exp_sum}),}
  & \qquad \le \limsup_{t\to\infty} c_1 e^{ (1 + 6c_2M + 4 c_4 c_3^{1/2} M^{1/2} ) \kappa^{1/3} t} \\
      & \qquad\qquad\qquad + 4 c_1 e^{\frac{M^2}{2}\kappa^{1/3}t} e^{ (1 + 6c_2M + 4 c_4 c_3^{1/2} M^{1/2} -\frac{M^2}{2} ) \kappa^{1/3} t} \\
  & \qquad \le \limsup_{t\to\infty} 5 c_1 e^{ (1 + 6c_2M + 4 c_4 c_3^{1/2} M^{1/2} ) \kappa^{1/3} t}.
\end{align*}
Returning to the definition of $\lambda(\kappa)$ we then have for $\kappa\in(0,\kappa_0(M))$
\begin{align*}
  \lambda(\kappa) & \le \limsup_{t\to\infty} \frac{1}{t} \ln u(x,t) \\
      & \le \limsup_{t\to\infty} \frac{1}{t} \ln 5 c_1 e^{ (1 + 6c_2M + 4 c_4 c_3^{1/2} M^{1/2} ) \kappa^{1/3} t} \\
      & = (1 + 6c_2M + 4 c_4 c_3^{1/2} M^{1/2} ) \kappa^{1/3}
\end{align*}
which completes the proof of Theorem \ref{one_third_bound}.

\bibliography{lyapunov_one_third_bound}
\bibliographystyle{plain}
\end{document}